\numberwithin{equation}{section}
\newtheorem{Theorem}{Theorem}[section]
\newtheorem{Fact}[Theorem]{Fact}
\newtheorem{Proposition}[Theorem]{Proposition}
\theoremstyle{remark}
\newtheorem{Remark}[Theorem]{Remark}
\newtheorem{Rmk}[Theorem]{Remark}
\theoremstyle{definition}
\newtheorem{Definition}[Theorem]{Definition}
\newtheorem{Example}[Theorem]{Example}
\newtheorem{Exa}[Theorem]{Example}
\newtheorem*{acknowledgements}{Acknowledgements}
\newcommand{\R}{{\mathbb R}}
\newcommand{\Z}{{\mathbb Z}}
\newcommand{\E}{{\mathbb E}}
\newcommand{\Lo}{{\mathbb L}}
\newcommand{\mc}[1]{{\mathcal #1}}
\newcommand{\mb}[1]{{\mathbf #1}}
\newcommand{\pmt}[1]{{\begin{pmatrix} #1  \end{pmatrix}}}
\renewcommand{\phi}{\varphi}
\renewcommand{\epsilon}{\varepsilon}
\newcommand{\op}[1]{{\operatorname{ #1}}}
\renewcommand{\phi}{\varphi}
\renewcommand{\epsilon}{\varepsilon}
\renewcommand{\phi}{\varphi}
\title{
Umbilics of surfaces \\
in the Lorentz-Minkowski $3$-space
}
\author{Naoya Ando}
\address[Naoya Ando]{%
Department of Mathematics,
Faculty of Advanced Science and Technology, 
Kumamoto University, 2-39-1 Kurokami, 
Chuo-ku,  Kumamoto 860-8555 Japan
}
\email{andonaoya@kumamoto-u.ac.jp}
\author{Masaaki Umehara}
\address[Masaaki Umehara]{%
   Department of Mathematical and Computing Sciences,
   Tokyo Institute of Technology
   2-12-1-W8-34, O-okayama, Meguro-ku,
   Tokyo 152-8552, Japan.
}
\email{umehara@is.titech.ac.jp}
\date{May 12, 2023}
\keywords{umbilic, 
curvature line flow, 
Ribaucour's parametrization,
Carath\'eodory conjecture}
\subjclass[2010]{Primary 
53A05; 
Secondary  53B30.
}
\thanks{
The first author was partially supported by 
JSPS KAKENHI Grant (C) No. 21K03228, and
the second author was partially supported by 
JSPS KAKENHI Grant  
(B) No.\  21H00981.
}
\begin{document}

\maketitle
\begin{abstract}
In this paper, we prove several fundamental properties
on umbilics of a space-like or time-like surface
in the Lorentz-Minkowski space $\Lo^3$.
In particular, we show that the local behavior of
the curvature line flows of the germ of a 
space-like surface in $\Lo^3$
is essentially the same as that of a surface in Euclidean space.
As a consequence, for each positive integer $m$ 
there exists a germ of a space-like surface 
with an isolated $C^{\infty}$-umbilic (resp.~$C^1$-umbilic) of index
$(3-m)/2$ (resp. $1+m/2$).
We also show that
the indices of isolated umbilics 
of time-like surfaces in $\Lo^3$
that are not the accumulation points of
quasi-umbilics are always equal to zero.
On the other hand, when quasi-umbilics accumulate,
there exist countably many germs of time-like surfaces 
which admit an isolated umbilic with non-zero indices.
\end{abstract}

\section*{Introduction}
We denote by $\E^3$ the Euclidean 3-space
and by $\Lo^3$ the Lorentz-Minkowski 3-space of signature $(++-)$.
An immersion $f:U\to \Lo^3$ defined on a neighborhood $U$ of the origin
$o:=(0,0)\in \R^2$ is said to be {\it space-like} (resp. {\it time-like})
if its induced metric (i.e. the first fundamental form of $f$)
is Riemannian (resp. Lorentzian).
An {\it umbilic} (resp.~a {\it quasi-umbilic})
is a point, where the shape operator $A_f$ of $f$
is a scalar multiple
of the identity transformation
 (resp. has an eigen-equation with a double root 
but $A_f$ is not diagonalizable).
Quasi-umbilics never appear on a space-like surface,
but may appear on a time-like surface. We note that
the principal curvatures of a time-like surface
may not be real-valued.
The location of the umbilics on a surface $S$ in $\R^3$ does depend on the choice
of the ambient metric $\R^3=\E^3$ or $\R^3=\Lo^3$.
In fact, even when $p$ is an umbilic
of a space-like surface $S$ in $\Lo^3$,
the point $p$ may not be an umbilic point if 
we think of $S$ as lying in $\E^3$ in general (see Example \ref{ex:Ea2}).

In this paper, we focus on the study of 
curvature line flows of surfaces in $\Lo^3$, 
and so we only consider space-like surfaces and
time-like surfaces whose principal curvatures are real.
An {\it index} of an isolated umbilic on a given regular 
surface is the index of one of the curvature line flows of the 
surface at that point, which takes values in the set $\frac12\Z$ of
half-integers. 
For a curvature line flow $\mc F$ around an isolated umbilic $o$ 
in
the domain of definition of
a space-like or time-like surface in $\Lo^3$, 
we can construct another curvature line flow $\mc F^\perp$ associated with $\mc F$
satisfying $(\mc F^\perp)^\perp=\mc F$ (see Proposition~C and 
Remark \ref{rmk:629}).
When the surface is space-like, the indices of these two flows at $o$
coincide. In particular, for a given isolated umbilic on a space-like 
surface, its index is uniquely determined.
On the other hand, when the surface is time-like, 
the two indices at $o$ 
might be different (cf. 
Theorem F, see also Example \ref{eq:I2}).
 
Since the turn of the 21st century, Tari \cite{T} proved an analogue of
the Carath\'eodory conjecture for closed convex surfaces
in $\Lo^3$, and Fontenele and Xavier \cite{FX} showed 
the non-positivity
of the index of an isolated umbilic on a surface in $\E^3$
which is negatively curved except at the umbilic.
The present article, inspired by these works,  
investigates the behavior of the curvature line flows on
space-like surfaces and time-like surfaces. 

In the authors' previous work \cite{AFU} with Fujiyama,
the existence of
isolated $C^1$-differentiable umbilics with arbitrarily high indices
was shown by two ways.
One is to use the inversion sending the point at infinity
in $\E^3$ to the origin as an umbilic, and
the other is the method
using the ``Ribaucour parameter"
described in the appendix of authors' 
previous work \cite{AFU} with Fujiyama,
by which we   the pair of curvature line flows of a surface in $\E^3$
can be transformed into the pair of the eigen-flows of the Hessian 
of a certain smooth function
(see \cite{BHM} where Ribaucour's parameters is explained in terms of
Ribaucour's transformations).
We call this procedure {\it Ribaucour's reduction}, 
which is the keystone of the method of this paper.
In fact, we first show that Ribaucour's reduction can be modified
for space-like surfaces in $\Lo^3$.
Using the modified reduction, we show that 
the pair of curvature line flows around a given umbilic of a space-like surface 
in $\Lo^3$ can be realized as a pair of 
the curvature line flows
around the corresponding umbilic of a certain
regular surface in $\E^3$. By this with the result in \cite{AFU},
we can deduce the following assertion:

\medskip
\noindent
{\bf Theorem A.}
{\it For each positive integer $m$,
there exist a neighborhood $U$ of the origin $o\in \R^2$
and a space-like $C^\infty$-immersion $($resp. $C^1$-immersion$)$
$g:U\to \Lo^3$ satisfying the following properties:
\begin{enumerate}
\item $g$ is $C^\infty$-differentiable and has no umbilics on $U\setminus \{o\}$,
\item $o$ is an isolated singular point
of each of the curvature line flows of $g$
 with index $(3-m)/2$ $($resp. $1+m/2)$.
\end{enumerate}
}

Motivated by the main theorem of \cite{FX},
we show the following:

\medskip
\noindent
{\bf Proposition B.}
{\it The index of an isolated umbilic $o$ on a space-like
surface in $\Lo^3$ whose Gaussian curvature is positive}
except at $o$ is non-positive.

\medskip
We next consider time-like surfaces, and
then introduce an analogue of Ribaucour's reduction
as in the case of space-like surfaces in $\Lo^3$. 
Using it, we show the following:

\medskip
\noindent
{\bf Proposition C.}
{ \it Let $U$ be a neighborhood of the origin $o\in \R^2$
and $h:U\to \Lo^3$ a $C^\infty$-differentiable time-like surface
which has no umbilics on $U\setminus \{o\}$. 
If there exists a $C^1$-differentiable curvature line flow $\mc F$
on $U\setminus \{o\}$ whose index at $o$ is $n/2\in \frac12\Z$,
then $\mc F$ canonically induces 
another curvature line flow $\mc F^\perp$ 
$($satisfying $(\mc F^\perp)^\perp=\mc F)$ whose
index at $o$ is $-n/2$.
Moreover, if $\mc F$ is $C^{r}$-differentiable $(r\ge 1)$, 
then so is $\mc F^\perp$.
}

\medskip
When $o$ is not an accumulation point of
quasi-umbilics, the following assertion holds:

\medskip
\noindent
{\bf Proposition D.}
{\it 
Let $h:U\to \Lo^3$ be a time-like surface as in
Proposition C,
which has a curvature line flow $\mc F$
on $U\setminus \{ o\}$.
If $h$ has no quasi-umbilics  
on $U\setminus \{o\}$,
then  the index of the flow $\mc F$ at $o$
is equal to zero.
In particular, if 
the Gaussian curvature of $h$ is negative on $U\setminus \{o\}$,
then the index vanishes.}

\medskip
In this proposition, the assumption that 
$h$ has no quasi-umbilics  
on $U\setminus \{o\}$
is necessary. 
If it is dropped, 
more than two $C^1$-differentiable curvature line flows 
with non-zero indices might 
exist (cf. Example \ref{eq:I2}). 
In fact, we can show the following two assertions:

\medskip
\noindent
{\bf Theorem E.}
{\it Let $h:U\to \Lo^3$ be a
$C^\infty$-differentiable time-like surface
whose principal curvatures are
real-valued. 
If $o\in U$ is a point such that there are no umbilics on $U\setminus \{o\}$, 
then there exists a $C^1$-differentiable 
curvature line flow on $U\setminus \{o\}$ with zero index}.

\medskip
\noindent
{\bf Theorem F.}
{\it There exist countably many real analytic time-like surfaces 
with non-positive Gaussian curvature
which  admit a pair of real analytic curvature line flows 
with an isolated umbilic with indices $\pm 1$.
}

\medskip
If the curvature line flow $\mc F$ is real analytic, 
then there are no real analytic curvature line flows other than $\mc F$ and
$\mc F^\perp$ (cf. Remark \ref{rmk:1172}).
The authors do not know of any $C^r$-differentiable ($r\ge 1$)
curvature line flows on time-like surfaces 
having isolated umbilics whose indices $I$ satisfy $|I|>1$.

In Tari \cite{T},
the non-existence of umbilics on the time-like part of a convex surface
in $\Lo^3$
was pointed out (see also in Remark~\ref{rmk:903}).

\section{Preliminaries}
Let
$
\lambda(x,y)
$
be a $C^\infty$-function defined on a certain open neighborhood $U$
of the origin $o:=(0,0)$ of the $xy$-plane. 
Assuming
$
\lambda (o)=\lambda_x (o)=\lambda_y (o)=0,
$
we consider the symmetric matrix 
$$
H_\lambda(x,y):=
\pmt{\lambda_{xx}(x,y) & \lambda_{xy}(x,y) \\
\lambda_{xy}(x,y) & \lambda_{yy}(x,y)
},
$$
which is the Hessian of $\lambda$. 
We suppose that $o$ is a {\it scalar point} of
$H_\lambda$, that is,
$H_{\lambda} (0, 0)$ is a scalar multiple
of the $2\times 2$ identity matrix. 
In addition, we also assume that 
there are no scalar points of $H_{\lambda}$ on $U$ other than $o$. 
We denote by $I_{\lambda}$ the index of one of the eigen-flows 
of the matrix $H_{\lambda} (x, y)$ at $o$.
We remark that the index of the other eigen-flow is equal to $I_{\lambda}$
(see Fact \ref{fact:1128}  in the appendix).
We set
\begin{equation}\label{eq:F254}
f_\lambda(x,y):=(x,y,0)-\lambda(x,y) \nu(x,y)
+\lambda(x,y)\mb e_3,
\end{equation}
which gives a $C^\infty$-regular surface in $\E^3$ 
defined on a neighborhood of the origin $o$,
where $\mb e_3:=(0,0,1)$ and
\begin{equation}\label{eq:N181}
\nu:=\frac{1}{1+\lambda_x^2+\lambda_y^2}
\left(2\lambda_x,2\lambda_y,\lambda^2_x+\lambda^2_y-1\right)
\end{equation}
gives a unit normal vector field of $f_\lambda$.
In this setting, $(x,y)$ is called
{\it Ribaucour's parametrization} of $f_\lambda$.
The following fact is classically known (cf. \cite[Appendix A]{AFU}):

\begin{Fact}\label{fact:192}
The origin of $\E^3$
is an isolated umbilic of $f_{\lambda}$ 
whose index coincides with $I_{\lambda}$.
\end{Fact}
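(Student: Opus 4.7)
The plan is to first verify that $o$ is an umbilic of $f_\lambda$ by a pointwise computation, and then to establish the global structural relation that on all of $U$ the shape operator of $f_\lambda$ is a nowhere-vanishing scalar multiple of $H_\lambda(x,y)$; this will simultaneously yield that $o$ is an isolated umbilic and that the curvature line flows of $f_\lambda$ on $U\setminus\{o\}$ coincide with the eigen-flows of $H_\lambda$, from which the equality of indices follows immediately.

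For the pointwise part, I would argue as follows. Because $\lambda(o)=\lambda_x(o)=\lambda_y(o)=0$, formula \eqref{eq:N181} gives $\nu(o)=-\mb e_3$, and \eqref{eq:F254} yields $f_\lambda(o)=0$ together with $f_{\lambda,x}(o)=(1,0,0)$ and $f_{\lambda,y}(o)=(0,1,0)$. Hence the first fundamental form at $o$ is represented by the $2\times 2$ identity matrix, and $\tilde\nu(o):=\mb e_3$ is a unit normal. Rewriting $f_\lambda=(x,y,\lambda)-\lambda\nu$ and differentiating twice, every term carrying an undifferentiated factor $\lambda$, $\lambda_x$ or $\lambda_y$ dies at $o$, so one finds $f_{\lambda,\alpha\beta}(o)=2\lambda_{\alpha\beta}(o)\mb e_3$ for $\alpha,\beta\in\{x,y\}$. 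Pairing with $\tilde\nu(o)$ yields a second fundamental form matrix equal to $2H_\lambda(o)$; since $H_\lambda(o)$ is scalar by hypothesis, $o$ is indeed an umbilic.

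For the global part, the classical content of Ribaucour's parametrization, recorded in \cite[Appendix A]{AFU}, asserts that on $U$ the shape operator $A_{f_\lambda}(x,y)$ is equal to $\phi(x,y)\,H_\lambda(x,y)$ for some nowhere-vanishing smooth function $\phi$. I would verify this by a direct calculation: differentiate $\nu$ using the stereographic form \eqref{eq:N181}, exploiting that $\nu$ lies on the unit sphere so $\langle d\nu,\nu\rangle=0$; compute $df_\lambda$ from \eqref{eq:F254}; choose a smooth unit normal $\tilde\nu$ of $f_\lambda$ agreeing with $\mb e_3$ at $o$; and then compare the matrix of the second fundamental form $-\langle d\tilde\nu,df_\lambda\rangle$ with the product of the first fundamental form matrix and $H_\lambda$ in the basis $(\partial_x,\partial_y)$. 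The specific stereographic normalization of $\nu$ together with the compensating $+\lambda\mb e_3$ term in \eqref{eq:F254} is engineered precisely so that these two matrices are proportional, with proportionality factor $\phi$ reducing to $1/2$ at $o$ in view of the pointwise computation above.

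Granted this proportionality, the umbilics of $f_\lambda$ coincide with the scalar points of $H_\lambda$; by hypothesis $o$ is the only such point in $U$, so $o$ is an isolated umbilic of $f_\lambda$. Moreover, on $U\setminus\{o\}$ the eigen-direction fields of $A_{f_\lambda}$ and of $H_\lambda$ are identical, and therefore define the same pair of flows; in particular they have the same index at $o$, namely $I_\lambda$. The main obstacle is the direct verification of $A_{f_\lambda}=\phi H_\lambda$ on all of $U$: it is a lengthy but essentially algebraic computation, whose delicate cancellations are forced by the stereographic normalization of $\nu$ and are what singles out Ribaucour's parametrization among all ways of writing a surface as a graph plus a normal perturbation.
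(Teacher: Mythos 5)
Your pointwise computation at $o$ is correct, and your overall strategy---identify the curvature line flows of $f_\lambda$ with the eigen-flows of $H_\lambda$ and then invoke the definition of $I_\lambda$---is exactly the classical content being invoked here (the paper gives no proof of this Fact, citing \cite[Appendix A]{AFU}; the same computation is reproduced for the space-like case in \eqref{eq:535}--\eqref{eq:581}). However, the structural claim on which your global step rests is false: away from $o$ the shape operator of $f_\lambda$ is \emph{not} a scalar multiple of $H_\lambda$. The paper's own formulas already show this. The displayed expressions for $L_S,M_S,N_S$ in the proof of Proposition B say precisely that the second fundamental form equals $\tfrac{2}{q_S}H_\mu+\tfrac{4\mu}{q_S^{2}}H_\mu^{2}$, a genuinely \emph{quadratic} polynomial in the Hessian; the Euclidean analogue is $\tfrac{2}{q_E}H_\lambda-\tfrac{4\lambda}{q_E^{2}}H_\lambda^{2}$, which is why the factor $D_E=\det\left(q_E^{}E-2\lambda H_\lambda\right)$ appears in \eqref{eq:B} ($E$ the identity matrix). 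Correspondingly, the principal curvatures of $f_\lambda$ are $k_i=2\mu_i/(q_E^{}-2\lambda\mu_i)$, a M\"obius function of the eigenvalues $\mu_1,\mu_2$ of $H_\lambda$ rather than a common multiple of them (one can check this on the round sphere $\lambda=(x^2+y^2)/4R$, where it gives $k_i=1/R$). Hence for any $\lambda$ with $\mu_1\ne\mu_2$ and $\lambda\ne 0$ off the origin, e.g.\ $\lambda=(x^2+2y^2)/2$, the ratio $k_1/k_2$ differs from $\mu_1/\mu_2$, which is incompatible with $A_{f_\lambda}=\phi H_\lambda$. So the ``lengthy but essentially algebraic computation'' you defer to would refute, not confirm, your identity.

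The gap is repairable because your conclusion needs only a weaker statement: that $A_{f_\lambda}$ is scalar exactly where $H_\lambda$ is, and that elsewhere the principal directions coincide with the eigen-directions of $H_\lambda$. This is true, but since the principal directions are determined by the first and second fundamental forms jointly (not by $I\!I$ alone), it should be established through the curvature-line equation, which is the route of \cite[Appendix A]{AFU} and of Section 2 of this paper: compute $\det(\nu,df_\lambda,d\nu)$ directly and obtain $\det(\nu,df_\lambda,d\nu)=c\,(dx,dy)\,S_\lambda\,(dx,dy)^T$ with $c$ nowhere zero and $S_\lambda$ the traceless symmetric matrix with entries $\lambda_{xy}$ and $(\lambda_{yy}-\lambda_{xx})/2$. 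From this, the umbilics of $f_\lambda$ are exactly the scalar points of $H_\lambda$ (so $o$ is an isolated umbilic), and the curvature line flows are the null-direction flows of $S_\lambda$, which by Proposition \ref{prop:989} and the characteristic-vector-field comparison preceding Theorem \ref{prop:505} are identified with the eigen-flows of $H_\lambda$; Fact \ref{fact:1128} then gives that both flows have index $I_\lambda$ at $o$. With your second paragraph replaced by this argument (or by the correct polynomial relation between $I\!I_{f_\lambda}$ and $H_\lambda$ together with the analogous relation for the first fundamental form), the proof is complete.
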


\begin{Remark}
One can show that
 $p\in U$ is an umbilic of the surface $f_\lambda$ if and only if
$(\lambda_{xx}-\lambda_{yy},\lambda_{xy})$ vanishes at $p$
(see the proof of \cite[Fact A.1]{AFU}). 
\end{Remark}

Here, we give elementary examples:

\begin{Exa}
We consider an ellipsoid
\begin{equation}\label{eq:ellisopid}
\frac{x^2}{a^2}+y^2+\frac{z^2}{b^2}=1\qquad (1<a\le b).
\end{equation}
When $a=b$, the ellipsoid has two umbilics of index $1$,
and when $a<b$, it has four umbilics of 
index $1/2$ (cf. \cite[Section 16]{UY}).
\end{Exa}

\begin{Exa}
We consider the function
$\lambda(x,y):=x^4-y^4$.
Then $H_\lambda$ is diagonal, and
$o$ is an isolated scalar point of $H_\lambda$.
So, the associated regular surface $f_\lambda$
(cf. \eqref{eq:F254})
has an isolated umbilic at $o$, whose index is equal to zero.
\end{Exa}

\begin{Exa}\label{exa:Z402}
We next consider the polynomial
\begin{equation}\label{eq:lambda_n}
\lambda(x,y):=\op{Re}(\zeta^n)\qquad (\zeta:=x+iy,\,\, n\ge 3).
\end{equation}
Since
$
(\lambda)_{\zeta\zeta}=n(n-1)\zeta^{n-2}/2
$,
the winding number 
of the complex-valued function $\lambda_{\zeta\zeta}$ 
with respect to 
a counterclockwise circle of small radius centered 
at the origin $o$
is equal to $n-2$, and
the associated regular surface $f_{\lambda}$ has an isolated 
umbilic at $o$ with index $1-n/2$ (cf. \cite[(B-1)]{AFU}).
\end{Exa}

In these examples, the following well-known fact can be observed.

\begin{Fact}\label{prop:294}
 For each positive integer $m$,
there exists a $C^\infty$-regular surface in $\E^3$ having an
isolated umbilic whose index is equal to $(3-m)/2$.
\end{Fact}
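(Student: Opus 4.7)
The plan is to verify that the three examples presented just before Fact \ref{prop:294}, together with Fact \ref{fact:192}, already exhaust every positive integer $m$, so that no new construction is required. I would proceed by a short case analysis on $m$, handling the three smallest values by inspection and the remaining infinite tail uniformly via Example \ref{exa:Z402}.

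For $m=1$ the target index is $(3-1)/2 = 1$; this is realized by the ellipsoid \eqref{eq:ellisopid} with $a=b$, which carries two umbilics of index $1$. For $m=2$ the target is $1/2$, realized by the same ellipsoid in the case $1<a<b$, which has four umbilics of index $1/2$. For $m=3$ the target is $0$, realized by $f_\lambda$ with $\lambda(x,y)=x^4-y^4$ via Fact \ref{fact:192}, since the associated isolated scalar point of $H_\lambda$ has $I_\lambda = 0$.

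For $m\ge 4$, I set $n := m-1 \ge 3$ and take $\lambda(x,y) := \op{Re}(\zeta^n)$ as in Example \ref{exa:Z402}. The example provides that the corresponding Ribaucour surface $f_\lambda$ has an isolated umbilic at the origin of index
\[
1 - \frac{n}{2} \;=\; \frac{2-(m-1)}{2} \;=\; \frac{3-m}{2},
\]
which matches the target. Combining the four cases establishes the fact for every positive integer $m$.

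The statement is labelled ``Fact'' precisely because there is no serious obstacle: the examples do all the work and only the indexing arithmetic needs checking. The only minor bookkeeping subtlety is that the two smallest values $m=1,2$ are treated using an explicit ambient description (the ellipsoid) rather than via Ribaucour's parametrization, whereas the cases $m\ge 3$ sit uniformly inside the Ribaucour framework through Fact \ref{fact:192}. This mild non-uniformity is harmless for the statement, but it is the reason I split the argument into the four cases above rather than trying to present a single family of examples.
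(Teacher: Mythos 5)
Your proposal is correct and matches the paper's intent exactly: the paper presents Fact \ref{prop:294} as an observation drawn from the three preceding examples (the ellipsoid for indices $1$ and $1/2$, the function $x^4-y^4$ for index $0$, and $\op{Re}(\zeta^n)$ with $n=m-1\ge 3$ for indices $(3-m)/2\le -1/2$), which is precisely the case analysis you carried out. The indexing arithmetic $1-n/2=(3-m)/2$ checks out, so nothing is missing.
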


\section{Umbilics of space-like surfaces in $\Lo^3$}

In this section, ``space-like surfaces"  always mean 
space-like regular surfaces.
As in the case of regular surfaces in $\E^3$,
space-like surfaces in $\Lo^3$ has exactly two
distinct principal directions at each non-umbilic point.
So, around an isolated umbilic,
a pair of curvature line flows is induced.
In this section, we investigate them.
More precisely, 
we modify Ribaucour's reduction given in 
\cite[Appendix A]{AFU} for space-like surfaces in $\Lo^3$ and 
prove Theorem A. 
We first give an example of a
space-like surface with umbilics:

\begin{Exa}\label{ex:Ea2}
We let $E_a$ be the ellipsoid given in \eqref{eq:ellisopid} 
by setting $a=b$. If $a>1$, then umbilics of $E_a$ as a 
surface in $\E^3$ are the two points $(0,\pm1,0)$.
However, if we think of $E_a$ as lying in $\Lo^3$, then 
these points $(0,\pm1,0)$ lie on the time-like part of $E_a$ 
and cannot be umbilics in $\Lo^3$ (cf. Remark \ref{rmk:903}). 
In fact, by a straightforward calculation,
one can check that the space-like part of $E_a$ has exactly
four space-like umbilics
$\frac{1}{\sqrt{2}}(\pm \sqrt{a^2-1},0,\pm \sqrt{a^2+1})$ in
$\Lo^3$.
\end{Exa}

We let $g:U\to \Lo^3$ be a space-like immersion
defined on a neighborhood $U$ of the origin
$o$ in the $xy$-plane.
Since we are interested in local properties of surfaces, 
we may set
$$
g(x,y):=(x,y,\phi(x,y)),
$$
where $\phi$ is a $C^\infty$-function satisfying
$
\phi(o)=\phi_x(o)=\phi_y(o)=0.
$
We denote by
$\lq\lq \cdot "$ 
the
Lorentzian inner product on $\Lo^3$, 
and consider the point 
$$
Q(x,y):=(\xi(x,y),\eta(x,y),0)
$$
in the $xy$-plane satisfying
\begin{equation}\label{eq:Q}
Q+\mu \mb e_3=g+\mu \nu\qquad (\mb e_3:=(0,0,1)),
\end{equation}
where $\mu$ is a certain $C^\infty$-function on $U$
and
$$
\nu(x,y):=\frac{-1}{\delta_+(x,y)}(\phi_x(x,y),\phi_y(x,y),1)\qquad
(\delta_+:=\sqrt{1-\phi_x^2-\phi_y^2}\,)
$$
gives a unit normal vector field of $g$,
that is, $|\nu\cdot \nu|=1$.
Comparing the third components of the
both sides of \eqref{eq:Q},
we obtain the relation
$$
\mu= \frac{\phi \delta_+}{1+\delta_+}.
$$
Since $(0,0)$ is a critical point of the function 
$\phi$, we have
$\mu(0,0)=0$ and $d\mu(0,0)=0$
and so $dg=dQ$ holds at $(0,0)$. In particular, 
$(\xi,\eta)$ can be taken as a new local coordinate system centered at $o$.
Differentiating \eqref{eq:Q} by $\xi$ and $\eta$ respectively,
and taking the Lorentzian inner products of 
the both sides of them with $\nu$, we have
$$
Q_\xi\cdot\nu-\mu_\xi^{} \nu_3=-\mu_\xi^{}, \qquad
Q_\eta\cdot\nu-\mu_\eta^{} \nu_3=-\mu_\eta^{},
$$
where $\nu=(\nu_1,\nu_2,\nu_3)$.
Since $Q_\xi=(1,0,0)$ and $Q_\eta=(0,1,0)$,
we obtain the following:
$$
\mu_\xi^{}=\frac{-\nu_1}{1-\nu_3},\qquad
\mu_\eta^{}=\frac{-\nu_2}{1-\nu_3},
$$
which correspond to the stereographic projection
of the hyperbolic space 
$$
\{(x,y,z)\in \Lo^3\,;\, x^2+y^2-z^2=-1,\,\, z<0\}
$$
to the $xy$-plane.
By this, as an analogue of 
\eqref{eq:N181}, we obtain
\begin{equation}\label{eq:n524}
\nu(\xi,\eta)=\frac{1}{\mu_\xi^2+\mu_\eta^2-1}
(2\mu_\xi^{},2\mu_\eta^{},\mu_\xi^2+\mu_\eta^2+1).
\end{equation}
So we have that
\begin{equation}\label{eq:g529}
g(\xi,\eta)=(\xi,\eta,0)-\mu(\xi,\eta) 
\nu(\xi,\eta)+\mu(\xi,\eta)\mb e_3,
\end{equation}
which is an analogue of \eqref{eq:F254}.
We call the above 
procedure {\it space-like Ribaucour's reduction}
and the coordinate system $(\xi,\eta)$ {\it space-like Ribaucour's 
parametrization} of $g$.
Let $\gamma(t)$ be a regular curve in the $\xi\eta$-plane.
This curve is an orbit of one of the curvature line flows of $g$ if and only if
$d(\nu\circ \gamma)(t)/dt$ and $d(g\circ\gamma)(t)/dt$ are linearly dependent.
By the same argument
as in \cite[Appendix A]{AFU},
the equation $\det(\nu,dg,d\nu)=0$
characterizes the curvature line flows of $g$,
and \eqref{eq:g529} yields that
\begin{equation}\label{eq:535}
\det(\nu,dg,d\nu)=
\det\pmt{
\nu_1 & d\xi       & d\nu_1 \\
\nu_2 & d\eta      & d\nu_2 \\
\nu_3 & \mu_\xi^{} d\xi+\mu_\eta^{} d\eta   & d\nu_3
},
\end{equation}
where $``\det"$ means the determinant function.
Since $g$ is space-like, we have
$
\nu_1^2+\nu_2^2-\nu_3^2=-1
$
and
\begin{equation}\label{eq:554}
\mu_\xi^{} \nu_1+\mu_\eta^{} \nu_2
={1+\nu_3}.
\end{equation}
By \eqref{eq:n524}, it holds that
\begin{equation}\label{eq:441}
d\nu=-\frac{dk}{k}\nu+\frac{2}{k}(d\mu_\xi^{},d\mu_\eta^{}, 
\mu_\xi^{} d\mu_\xi^{}+\mu_\eta^{} d\mu_\eta),
\end{equation}
where $k:=\mu_\xi^2+\mu_\eta^2-1$.
By  
\eqref{eq:535}, \eqref{eq:554} 
 and \eqref{eq:441}, we have (see also \cite[Appendix A]{AFU})
\begin{align}\label{eq:581}
\det(\nu,dg,d\nu)&=
\frac{2}{k}\det\pmt{
\nu_1 & d\xi       & d\mu_\xi^{} \\
\nu_2 & d\eta      & d\mu_\eta^{} \\
\mu_\xi^{} \nu_1+\mu_\eta^{} \nu_2-1 & 
\mu_\xi d\xi+\mu_\eta d\eta   & \mu_\xi^{} d\mu_\xi^{}+\mu_\eta^{} 
d\mu_\eta^{}
}  \\ \nonumber
&=
\frac{2}{k}
\det\pmt{
\nu_1 & d\xi       & d\mu_\xi^{} \\
\nu_2 & d\eta      & d\mu_\eta^{} \\
-1 & 0   & 0
} 
=\frac{-2}{k}(d\xi, d\eta) S_\mu
\pmt{d\xi \\ d\eta},
\end{align}
where
$$
S_\mu:=
\pmt{\mu_{\xi\eta}^{} & (\mu_{\eta\eta}^{}-\mu_{\xi\xi}^{})/2 \\
(\mu_{\eta\eta}^{}-\mu_{\xi\xi}^{})/2 & -\mu_{\xi\eta}^{}
}.
$$
Thus, the curvature line flows of $g$ just coincide with the
null direction flows of $S_\mu$ 
(see the appendix in this paper for the definition of null directions). 

\begin{Rmk}\label{rmk:629}
If $\mb v:=u(\partial/\partial \xi)_p+v(\partial/\partial \eta)_p$ is a
tangent vector at $p\in U$
giving a null direction of $S_\mu$, then
$\mb v^\perp:=
-v(\partial/\partial \xi)_p+u(\partial/\partial \eta)_p$ 
is the $90^\circ$-rotation of $\mb v$ 
giving also a null direction of $S_\mu$.
So if $\mc F$ is a curvature line flow of the space-like surface
$g$, then the $90^\circ$-rotation $\mc F^\perp$ in the $\xi\eta$-plane
also gives a curvature line flow of $g$.
This fact is one of the strengths of 
Ribaucour's parametrizations.
As a consequence, the curvature line flows of $g$ can be considered as a pair 
$(\mathcal{F} , \mathcal{F}^{\perp})$.
\end{Rmk}

The characteristic vector field $\mb v_{S_\mu}^{}$
 (cf. the appendix) of $S_\mu$ is given by
$$
\mb v_{S_\mu}^{}=2\mu_{\xi\eta}^{}\frac{\partial}{\partial \xi}+
(\mu_{\eta\eta}^{}-\mu_{\xi\xi}^{}) \frac{\partial}{\partial \eta}.
$$
The $90^\circ$-rotation of this vector field is
$
(\mu_{\xi\xi}^{}-\mu_{\eta\eta}^{}) \frac{\partial}
{\partial \xi}
+
2\mu_{\xi\eta}^{}\frac{\partial}{\partial \eta},
$
which coincides with the characteristic vector field of 
the Hessian $H_\mu$ of the function $\mu$.
By Proposition \ref{prop:989} in the appendix,
the null direction flows of $S_\mu$ 
are obtained by the $45^\circ$-rotation 
of the eigen-flows of $S_\mu$ in the $\xi\eta$-plane.
Thus, the null direction flows of $S_\mu$ 
can be identified with the eigen-flows of $H_\mu$.
Using these discussions,
we obtain the following:

\begin{Theorem}\label{prop:505}
For a given $C^\infty$-function $\mu$ on a neighborhood $U$
of the origin $o$ in the $\xi\eta$-plane, the map
$g:U\to \Lo^3$ given by \eqref{eq:g529} with \eqref{eq:n524}
is a space-like immersion.
Any congruence class of germs of space-like immersions in $\Lo^3$
is obtained in this manner.
Moreover, $p\in U$ is an umbilic if and only if
$H_\mu$ is a scalar matrix at $p$. 
If $p\in U$ is not an umbilic,
the principal directions of $g$ at $p$
can be obtained by the eigen-directions of $H_\mu$
as in \eqref{eq:Tm}.
In particular, if $o$ is an isolated umbilic of $g$,
then the pair of curvature line flows of $g$ exists
around $o$
and they have the same index at $o$.
\end{Theorem}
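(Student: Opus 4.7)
The proof is essentially a synthesis of the computations carried out before the statement; my plan is to organize them into four short steps and check that no gap remains.

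First, I would verify that the map $g$ defined by \eqref{eq:g529} with \eqref{eq:n524} is a space-like immersion. The vector $\nu$ in \eqref{eq:n524} is nothing but the stereographic parametrization of the lower sheet of the hyperbolic space, so $\nu\cdot\nu=-1$, consistent with it being a time-like unit normal. At the origin, since $\mu$ vanishes to first order by construction, $dg$ reduces to $(d\xi,d\eta,0)$, whose induced metric is the standard Euclidean one; by continuity the first fundamental form remains Riemannian on a neighborhood, so $g$ is space-like there. For the converse (that every congruence class of space-like germs arises this way), I would refer back to the derivation of space-like Ribaucour's reduction given in the excerpt: starting from a germ $g(x,y)=(x,y,\phi(x,y))$ we extract $\mu=\phi\delta_+/(1+\delta_+)$ and the new coordinates $(\xi,\eta)$ via \eqref{eq:Q}, and the implicit function theorem (using $d\mu(o)=0$, hence $dg=dQ$ at $o$) ensures this produces a valid chart near $o$.

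Second, for the umbilic characterization I would invoke \eqref{eq:581}, which expresses the curvature line equation $\det(\nu,dg,d\nu)=0$ as the vanishing of the quadratic form $(d\xi,d\eta)\,S_\mu\,(d\xi,d\eta)^{\top}$. Thus $p\in U$ is an umbilic, i.e.\ every tangent direction is a principal direction, if and only if $S_\mu(p)=0$, and the explicit shape of $S_\mu$ shows this is equivalent to $\mu_{\xi\eta}(p)=0$ together with $\mu_{\xi\xi}(p)=\mu_{\eta\eta}(p)$, which is exactly the condition that $H_\mu(p)$ is a scalar matrix.

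Third, at a non-umbilic $p$ the principal directions of $g$ are the null directions of $S_\mu$, and by Proposition~\ref{prop:989} these are the $45^\circ$-rotations of the eigen-directions of $S_\mu$. The characteristic vector field of $S_\mu$ is $2\mu_{\xi\eta}\partial/\partial\xi+(\mu_{\eta\eta}-\mu_{\xi\xi})\partial/\partial\eta$, whose $90^\circ$-rotation is the characteristic vector field of $H_\mu$; consequently the eigen-frames of $S_\mu$ and $H_\mu$ themselves differ by a $45^\circ$-rotation. The two $45^\circ$-rotations cancel, so the principal directions of $g$ coincide with the eigen-directions of $H_\mu$, as the statement claims. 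For the final clause, at an isolated umbilic $o$ the pair of curvature line flows of $g$ is identified with the pair of eigen-flows of $H_\mu$, and by Fact~\ref{fact:1128} the two eigen-flows of $H_\mu$ at an isolated scalar point share the same index; hence so do the two curvature line flows of $g$.

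The main potential obstacle I anticipate is the careful bookkeeping of the two $45^\circ$-rotations in step three: one has to verify that they cancel rather than composing into a $90^\circ$-rotation, and that the sign and orientation conventions adopted in the appendix for characteristic vector fields and null directions are consistent with those used for $H_\mu$ in Section~1. Once that is settled, the remaining assertions are direct transcriptions of identities already displayed in the excerpt.
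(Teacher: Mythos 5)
Your proposal is correct and follows essentially the same route as the paper: the paper's formal proof consists only of the umbilic characterization via \eqref{eq:581} (your second step), while your remaining steps are a faithful reorganization of the computations the paper carries out immediately before the theorem statement (the space-like Ribaucour reduction for surjectivity, the $45^\circ$-rotation identification of the null-direction flows of $S_\mu$ with the eigen-flows of $H_\mu$, and Fact~\ref{fact:1128} for the equality of indices). The bookkeeping you flag in step three is harmless in any case, since an unordered pair of perpendicular directions is invariant under a $90^\circ$-rotation, so the $\pm45^\circ$ ambiguity does not affect the conclusion.
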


\begin{proof}
It is sufficient to show that
$p$ is an umbilic point of $h$ when $S_\mu$ vanishes at $p$,
which follows from \eqref{eq:581}, since
$\det(\nu,dg,d\nu)$ vanishes at $p$ if and only if 
$p$ is an umbilic.
\end{proof}

Using Theorem \ref{prop:505}, 
we prove Theorem A in the introduction:

\begin{proof}[Proof of Theorem A]
We fix a positive integer $m$.
By
Fact~\ref{prop:294},
there exists a regular surface $f$ in $\E^3$
which has an isolated
umbilic whose index is equal to $(3-m)/2$.
Then there exist a new local coordinate system
$(x,y)$ and
a $C^\infty$-function $\lambda(x,y)$
such that $f$
is expressed as
\eqref{eq:F254}
with \eqref{eq:N181}.
The pair of curvature line flows of $f$ coincides with the pair of
 eigen-flows of $H_{\lambda}$, and the isolated umbilic of 
$f$ corresponds to an isolated scalar point of $H_{\lambda}$.

By setting $\mu:=\lambda$, we
define a regular space-like surface $g$ in $\Lo^3$
given by \eqref{eq:g529} with \eqref{eq:n524}. By Theorem~\ref{prop:505}
(see also \eqref{eq:581}), 
one of the two curvature line flows of $g$ coincides with
either of the eigen-flows of $H_\lambda$,
and the isolated 
scalar point of $H_{\lambda}$ corresponds to an isolated umbilic of $g$.
So the index of the isolated umbilic of $g$ is $(3-m)/2$.

We next consider the
$C^1$-differentiable function
$$
\lambda_m(\xi,\eta)=|z|^2 \tanh\left(|z|^{-a}\op{Re}(z^m/|z|^m)\right)
\qquad (z:=\xi+\sqrt{-1}\eta,\,\,0<a<1)
$$
given in \cite{AFU}.
By setting $\xi=\rho \cos t$ and $\eta=\rho \sin t$ ($\rho>0$),
the function $\lambda_m$ induces a function
$$
\tilde \lambda_m(\rho,t):=\rho^2 \tanh(\rho^{-a}\cos m t),
$$
of variables $\rho,t$.
In \cite[Section 6]{AFU},
it was proved that
the indices of the eigen-flows of $H_{\lambda_m}$ at $o$
are equal to $1+m/2$. By setting
$\mu :=\lambda_m$,
the immersion $g_m$ defined by 
\eqref{eq:g529} with \eqref{eq:n524}
is $C^1$-differentiable at $o$, and
$C^\infty$-differentiable on $V\setminus \{o\}$
for a sufficiently small neighborhood $V$ of $o$.
The indices of the curvature line flows of
$g_m$ at $o$ are equal to $1+m/2$. Thus, Theorem A is obtained.
\end{proof}

It is well-known that the Gaussian curvature of a 
(space-like or time-like) 
surface in $\Lo^3$ has
the opposite sign of that of the same surface in $\E^3$ (thinking of
$\E^3$ as the space $\R^3$ with the canonical Euclidean metric).
Regarding this,
we now prove Proposition B:

\begin{proof}[Proof of Proposition B]
Let $g(\xi,\eta)$ be the space-like immersion
given by 
\eqref{eq:n524}
and
\eqref{eq:g529} using a $C^\infty$-function $\mu$
defined on a neighborhood $U$
of the origin of the $\xi\eta$-plane.
We let $I\!I_g:=L_Sd\xi^2+2M_S d\xi d\eta+N_Sd\eta^2$
be the second fundamental form of $g$.
We may assume that $(0,0)$ corresponds to the umbilic $o$.
From now on, we will show the following
equivalency;
$$
\text{$I\!I_g$ is negative definite}
\Longleftrightarrow 
\text{$H_\lambda$ is negative definite}
\Longleftrightarrow 
\text{$I\!I_{f_\lambda}$ is negative definite},
$$
where $I\!I_{f_\lambda}$ is the second fundamental form 
of the surface $f_\lambda$ induced by $\lambda:=\mu$ in
the Euclidean $3$-space.
Then, by applying the theorem in \cite{FX},
we can conclude that
the indices of the curvature line flows of $f_\lambda$ at $o$
are non-positive, and so, the space-like surface $g$ at $o$
has the same property.
By a straightforward computation,
we have 
\begin{align*}
L_S =\dfrac{2\mu_{\xi \xi}}{q_S^{}}& 
+\dfrac{4\mu (\mu^2_{\xi \eta} +\mu^2_{\xi \xi})}{q^2_S}, \qquad
M_S =\dfrac{2\mu_{\xi \eta}}{q_S^{}} 
+\dfrac{4\mu \mu_{\xi \eta} (\mu_{\xi \xi} +\mu_{\eta \eta})}{q^2_S}, \\
&\phantom{aaaaqqqqq} N_S =\dfrac{2\mu_{\eta \eta}}{q_S^{}} 
+\dfrac{4\mu (\mu^2_{\xi \eta} +\mu^2_{\eta \eta})}{q^2_S}, 
\end{align*}
where $q_S^{}:=1-\mu_\xi^2-\mu_\eta^2$.
Hence
\begin{equation}\label{eq:A}
L_SN_S-M_S^2
=\frac{4 \left(\mu_{\xi\xi}^{} \mu_{\eta\eta}^{}-\mu_{\xi\eta}^2\right) 
D_S}{q_S^4},
\end{equation}
where
$$
D_S:=q_S^2
+2\mu(\mu_{\xi\xi}^{}+\mu_{\eta\eta}^{})q_S^{}+4\mu^2
 (\mu_{\xi\xi}^{} \mu_{\eta\eta}^{}-\mu_{\xi\eta}^2).
$$
Since 
\begin{equation}\label{eq:726}
\mu(o)=\mu_\xi(o)=\mu_\eta(o)=0,
\end{equation}
we have  $q_S^{}>0$ and $D_S>0$
at $(\xi,\eta)=o$.
So there exists a neighborhood $V(\subset U)$ of $o$
such that $q_S^{}$ and $D_S$ are positive on $V$.
Then (2.8) implies that
$L_S N_S -M^2_S <0$ is equivalent to 
\begin{equation}\label{eq:733}
\mu_{\xi\xi}^{} \mu_{\eta\eta}^{}-\mu_{\xi\eta}^2<0
\end{equation}
on $V\setminus \{o\}$.
We then set $\lambda:=\mu$, and 
let $f_\lambda(\xi,\eta)$ be the regular surface 
given by \eqref{eq:F254}
and \eqref{eq:N181}.
The second fundamental form $I\!I_{f_\lambda}$
of the surface $f_\lambda$ in $\E^3$ can be written as
$I\!I_{f_\lambda}=L_Ed\xi^2+2M_Ed\xi d\eta+N_Ed\eta^2$.
Again, by a straightforward computation,
we have 
\begin{equation}\label{eq:B}
L_EN_E-M_E^2=\frac{4 \left(\lambda_{\xi\xi} \lambda_{\eta\eta}-\lambda_{\xi\eta}^2\right) 
D_E}{q_E^4},
\end{equation}
where $q_E^{}:=1+\lambda_\xi^2+\lambda_\eta^2$ and
$$
D_E:=q_E^2
-2\lambda(\lambda_{\xi\xi}^{}+\lambda_{\eta\eta}^{})q_E^{}+4\lambda^2 
(\lambda_{\xi\xi}^{} \lambda_{\eta\eta}^{}-\lambda_{\xi\eta}^2).
$$
Since $\lambda=\mu$, the inequality \eqref{eq:733}
is equivalent to $L_EN_E-M_E^2<0$
(that is, the Gaussian curvature of $f_\lambda$ is negative) 
on $W\setminus \{ o\}$ for a sufficiently small neighborhood $W$ of $o$.
Thus, by the theorem in \cite{FX},
the indices of the curvature line flows of $f_\lambda$ at $o$
are non-positive.
So, we obtain the conclusion.
\end{proof}

\section{Umbilics of time-like surfaces in $\Lo^3$}

In this section, ``time-like surfaces" 
always mean time-like regular surfaces.
For time-like surfaces in $\Lo^3$,
even at non-umbilic points, 
principal directions might not exist in general,
and even if the directions exist, 
the two principal directions might coincide
(such cases happen when they coincide with
a null direction of the first fundamental form).
In this section, as in the case of space-like surfaces,
we give an analogue of Ribaucour's parametrization
for time-like surfaces and prove Proposition C.
We let
$h:U\to \Lo^3$ be a time-like immersion
defined on a neighborhood $U$ of the origin
$o$ in the $yz$-plane.
We may set 
$$
h(y,z):=(\psi(y,z),y,z),
$$
where $\psi$ is a certain $C^\infty$-function defined on $U$ satisfying 
$
\psi(0,0)=\psi_y(0,0)=\psi_z(0,0)=0.
$
We consider the point 
$
Q(y,z):=(0,\xi(y,z),\eta(y,z))
$
satisfying
\begin{equation}\label{eq:Q2}
Q+\mu \mb e_1=h+\mu \nu \qquad \big(\mb e_1:=(1,0,0)\big),
\end{equation}
where $\mu$ is a certain $C^\infty$-function on $U$.
Moreover,
$$
\nu(y,z):=\frac{-1}{\delta_-(y,z)}
(1,-\psi_y(y,z),\psi_z(y,z))\qquad
(\delta_-:=\sqrt{1+\psi_y^2-\psi_z^2}\,)
$$
gives a unit normal vector field of $h$.
Comparing the first components of the both sides of \eqref{eq:Q2},
we have
$$
\mu= \frac{\psi \delta_-}{1+\delta_-}.
$$
Since $(0,0)$ is a critical point of the function $\psi$, we have
$\mu(0,0)=0$ and $d\mu(0,0)=0$.
In particular, $dh=dQ$ holds at $(0,0)$, and 
$(\xi,\eta)$ can be taken as a new local coordinate system centered at $o$.
Differentiating \eqref{eq:Q2} by $\xi$ and $\eta$ respectively,
and taking the Lorentzian inner products of both sides of them
with $\nu$, we have
$$
Q_\xi\cdot\nu+\mu_\xi^{} \nu_1=\mu_\xi^{},\qquad
Q_\eta\cdot\nu+\mu_\eta^{} \nu_1=\mu_\eta^{}.
$$
Since $Q_\xi=(0,1,0)$ and $Q_\eta=(0,0,1)$, we have
$$
\mu_\xi^{}=\frac{\nu_2}{1-\nu_1},\qquad
\mu_\eta^{}=\frac{-\nu_3}{1-\nu_1},
$$
which correspond to the stereographic projection of the subset 
$\{(x,y,z)\in \Lo^3\,;\, x^2+y^2-z^2=1\}\setminus\{x=1\}$
of de Sitter plane to the $yz$-plane.
So we have
\begin{equation}\label{eq:nu729}
\nu(\xi,\eta)=\frac{1}{-1-\mu_\xi^2+\mu_\eta^2}(1-\mu_\xi^2+\mu_\eta^2,
-2\mu_\xi^{},2\mu_\eta^{})
\end{equation}
and
\begin{equation}\label{eq:h734}
h(\xi,\eta)=(0,\xi,\eta)-\mu(\xi,\eta) \nu(\xi,\eta)+\mu(\xi,\eta)\mb e_1
\end{equation}
analogous to \eqref{eq:F254} and \eqref{eq:g529}.
We call the procedure {\it time-like Ribaucour's reduction}
and $(\xi,\eta)$ {\it time-like Ribaucour's parametrization} of $h$.
Since $h$ is a time-like surface,
$
\nu_1^2+\nu_2^2-\nu_3^2=1
$
holds. So we have $\mu_\xi^{} \nu_2+\mu_\eta^{} \nu_3={1+\nu_1}$.
By \eqref{eq:nu729},
we have
$$
d\nu=-\frac{dk}{k}\nu+\frac{2}{k}
(-\mu_\xi^{} d\mu_\xi^{}+\mu_\eta^{} d\mu_\eta^{}, 
-d\mu_\xi^{},d\mu_\eta^{}),
$$
where $k:=-1-\mu_\xi^2+\mu_\eta^2$.
As in the case of space-like surfaces, 
we have
(cf. \eqref{eq:581})
\begin{equation}\label{eq:930}
\det(\nu,dh,d\nu)=
\det\pmt{
\nu_1 & \mu_\xi^{} d\xi+\mu_\eta^{} d\eta   & d\nu_1\\
\nu_2 & d\xi       & d\nu_2 \\
\nu_3 & d\eta      & d\nu_3 
} 
=
\frac{-2}{k}
(d\xi,d\eta)T_\mu \pmt{d\xi\\ d\eta},
\end{equation}
where
\begin{equation}\label{eq:Tm}
T_\mu:=
\pmt{\mu_{\xi\eta}^{} & (\mu_{\eta\eta}^{}+\mu_{\xi\xi}^{})/2 \\
(\mu_{\eta\eta}^{}+\mu_{\xi\xi}^{})/2 & \mu_{\xi\eta}^{}
}.
\end{equation}
Thus, if a curvature line flow of $h$ on $U\setminus \{o\}$
exists, then it corresponds to a 
null direction flow (see the appendix) of the symmetric matrix $T_\mu$.

\begin{Remark}\label{rmk:TcT}
We consider two matrices
$$
T=\pmt{c & (a+b)/2 \\
      (a+b)/2 & c},\quad
\check T:=E_2\pmt{a & c \\
      c & b}=
\pmt{a & c \\
      -c & -b}
 \quad (E_2:=\pmt{1 & 0 \\ 0 & -1}).
$$
Then the null vectors of $T$ coincide with the eigenvectors
of $\check T$. So the null vectors of $T_\mu$
coincide with the eigenvectors of the matrix $\check T_\mu:=E_2H_\mu$,
where $H_\mu$ is the Hessian matrix of the function $\mu$.
\end{Remark}

Using the above observation, we can show the following:

\begin{Theorem}\label{prop:758}
For a given $C^\infty$-function $\mu$ on a neighborhood $U$
of the origin in the $\xi\eta$-plane, the map
$h:U\to \Lo^3$ 
given by \eqref{eq:h734} with
\eqref{eq:nu729}
is a time-like immersion.
Any congruence class of
time-like immersions in $\Lo^3$
is  obtained in this manner.
Moreover, $p\in U$ is an umbilic $($resp.\!~an umbilic 
or a quasi-umbilic$)$
 if and only if
$T_\mu$ $($resp. $\det T_\mu)$
vanishes at $p$. If
$p$ is not an umbilic, 
the principal directions of $h$ at $p$
can be characterized by the null vectors
 of $T_\mu$
of \eqref{eq:Tm}.
\end{Theorem}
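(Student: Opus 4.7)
The plan is to follow the template of Theorem~\ref{prop:505} and adapt it to the time-like setting, with the key new feature being the possible appearance of quasi-umbilics. The discussion preceding the statement already shows that any germ of a time-like immersion in $\Lo^3$ admits a time-like Ribaucour parametrization after applying a suitable Lorentz motion to place the tangent plane at the base point along the $yz$-plane; this gives the universality assertion. Conversely, starting from any $C^\infty$-function $\mu$ with $\mu(o)=d\mu(o)=0$, the formulas \eqref{eq:h734} and \eqref{eq:nu729} define a map $h$ whose normal $\nu$ satisfies $\nu\cdot\nu = 1$ by a direct computation, and at $o$ the differential $dh$ agrees with $d(0,\xi,\eta)$, which has rank two. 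By continuity, $h$ is a time-like immersion on a neighborhood of $o$.

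For the umbilic characterization, recall from \eqref{eq:930} that the equation $\det(\nu, dh, d\nu)=0$, which cuts out the principal directions of $h$, is proportional to the quadratic form associated with $T_\mu$. A point $p\in U$ is an umbilic exactly when every tangent direction is principal, that is, when this quadratic form vanishes identically at $p$; this is equivalent to $T_\mu(p)=0$. At a non-umbilic point $p$, the principal directions of $h$ at $p$ are precisely the null directions of $T_\mu$, and by Remark~\ref{rmk:TcT} these coincide with the eigendirections of $\check T_\mu = E_2 H_\mu$, proving the final assertion of the theorem.

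To treat umbilics and quasi-umbilics together, I would invoke Remark~\ref{rmk:TcT} once more. A short computation shows that the discriminant of the characteristic polynomial of $\check T_\mu$ equals $-4\det T_\mu$, so $\det T_\mu(p)=0$ if and only if $\check T_\mu(p)$ has a double eigenvalue. Two subcases arise: if $T_\mu(p)=0$ then $\check T_\mu(p)$ is a scalar matrix, every direction is principal, and $p$ is an umbilic; if $T_\mu(p)\neq 0$ but $\det T_\mu(p)=0$, then $\check T_\mu(p)$ has a double eigenvalue yet is not scalar, so it fails to be diagonalizable. In this second situation, the two principal directions of $h$ at $p$ collapse to a single direction, which is then necessarily null for the first fundamental form; this is exactly the definition of a quasi-umbilic. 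Conversely, when $\det T_\mu(p)\neq 0$, the matrix $\check T_\mu(p)$ has two distinct real eigenvalues and $p$ carries two distinct principal directions, hence is neither umbilic nor quasi-umbilic.

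The main obstacle I expect is the quasi-umbilic case, which has no analogue in the space-like theorem: one must certify that the algebraic failure of $\check T_\mu$ to be diagonalizable genuinely corresponds to the shape operator having a double eigenvalue with a single eigendirection along a null direction of the induced metric. Remark~\ref{rmk:TcT} does the essential bookkeeping, converting the Lorentzian condition ``null vector of the symmetric matrix $T_\mu$'' into the linear-algebraic condition ``eigenvector of $\check T_\mu$'', after which the classification of $2\times 2$ matrices by the pair $(\det T_\mu, T_\mu)$ completes the argument.
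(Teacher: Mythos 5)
Your proposal is correct and follows essentially the same route as the paper: both rest on \eqref{eq:930} identifying principal directions with null directions of $T_\mu$, then read off umbilics as $T_\mu=0$ and quasi-umbilics as $\det T_\mu=0$ with $T_\mu\neq 0$ (the paper phrases the latter directly as ``$T_\mu$ has a unique null direction,'' while you reach the same count via the eigenvalue multiplicity of $\check T_\mu=E_2H_\mu$, which is an equivalent piece of linear algebra). The only cosmetic difference is that the paper's proof is terser, delegating the immersion and universality claims entirely to the preceding construction, exactly as you do.
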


\begin{proof}
It is sufficient to show the assertions on umbilics and quasi-umbilics.
We first consider umbilics of $h$.
It is sufficient to show that
$p$ is an umbilic point of $h$ when $T_\mu$ 
vanishes at $p$,
which follows by the same reason
as the proof of Theorem~\ref{prop:505}.

We next consider quasi-umbilics of $h$.
By
\eqref{eq:930},
$p\in U$ is a quasi-umbilc precisely when $T_{\mu}$
has a unique null-dierction, that is, when $\det T_\mu=0$ but $T_\mu\ne 0$
at $p$. So we obtain the conclusion.
\end{proof}

As an application of
Theorem \ref{prop:758}, we prove Proposition C in the introduction.

\begin{proof}[Proof of Proposition C]
By Theorem \ref{prop:758},
we may assume that the time-like immersion $h:U\to \Lo^3$
is given by \eqref{eq:h734} with \eqref{eq:nu729} 
associated with 
a $C^\infty$-function $\mu$ defined on 
a neighborhood $U$ of the origin in the $\xi\eta$-plane.
We let $\mc F$ be a $C^1$-curvature line flow of $h$
on $U\setminus \{o\}$.
Then $\mc F$ is generated locally by a null vector field
$$
(\mb v(\xi,\eta):=)u(\xi,\eta)\frac{\partial}{\partial \xi}+v(\xi,\eta)\frac{\partial}{\partial \eta}
$$
of $T_\mu$.
For simplicity, we set
\begin{equation}\label{eq:simple}
2T_\mu(\xi,\eta)=\pmt{a(\xi,\eta) & b(\xi,\eta) \\ b(\xi,\eta) & a(\xi,\eta)} \qquad
(a:=2\mu_{\xi\eta}^{},\,\, b:=\mu_{\eta\eta}^{}+\mu_{\xi\xi}^{}).
\end{equation}
In this situation, $\mb v(\xi,\eta)$
is a null vector of $T_\mu(\xi,\eta)$ if
and only if
\begin{equation}\label{eq:1083}
0=2(u,v) T_\mu \pmt{u \\ v}=(u,v)\pmt{au+bv \\ bu+av}
=a(u^2+v^2)+2b uv.
\end{equation}
Since the right hand side of this equation is invariant when swapping
$u$ and $v$, the vector field 
$$
\mb v^\perp(\xi,\eta):=v(\xi,\eta)\frac{\partial}{\partial \xi}
+u(\xi,\eta)\frac{\partial}{\partial \eta} 
$$
also yields a principal direction of $h$ at each point $(\xi,\eta)\in U$.
Then $\mb v^\perp$ generates another curvature line flow $\mc F^\perp$.
By definition, $(\mc F^\perp)^\perp=\mc F$ and
$\mc F^\perp$ is $C^r$-differentiable  ($r\ge 1$)  if and only if
$\mc F$ is $C^r$-differentiable.
Since the transformation $(x,y)\mapsto (y,x)$ in $\R^2$ is
orientation reversing,
the index of $\mc F$ at $o$ is $n/2$ $(n\in \Z)$ if and only if
the index of $\mc F^\perp$ 
at $o$ is $-n/2$, proving the assertion.
\end{proof}

\begin{Remark}\label{rmk:903}
If we denote by
$$
E_Td\xi^2+2F_Td\xi d\eta+G_T{d\eta^2},\qquad
L_Td\xi^2+2M_Td\xi d\eta+N_T{d\eta^2}
$$
the first and the second fundamental forms of a 
time-like surface $h$, then as an analogue to
\eqref{eq:A} and \eqref{eq:B}, 
one can show that
\begin{equation}\label{eq:1075}
L_TN_T-M_T^2=\frac{4 \left(\mu_{\xi\xi}^{} \mu_{\eta\eta}^{}
-\mu_{\xi\eta}^2\right) 
D_T}{q_T^4},
\end{equation}
where $q_T^{}:=1+\mu_\xi^2-\mu_\eta^2$ and
$$
D_T:=q_T^2
-2\mu(\mu_{\xi\xi}^{}-\mu_{\eta\eta}^{})q_T^{}
-4\mu^2 \left(\mu_{\xi\xi}^{} \mu_{\eta\eta}^{}
-\mu_{\xi\eta}^2\right).
$$
If $h$ is strictly locally convex, 
then $\mu_{\xi\xi}^{} \mu_{\eta\eta}^{}
-\mu_{\xi\eta}^2$ is positive at $(\xi,\eta)=(0,0)$.
In this case, the Gaussian curvature of $h$ is negative.
Since $E_T=-G_T=1$ and $F_T=0$ at $(\xi,\eta)=(0,0)$,
the sign of the determinant of the shape operator
is negative near $(0,0)$. So,
$h$ does not admit any umbilics on a neighborhood of $(0,0)$, 
which confirms the 
observation of Tari \cite{T} mentioned in the last paragraph
of the introduction.
\end{Remark}

\begin{proof}[Proof of Proposition D]
For simplicity, we write $T_\mu$
as in \eqref{eq:simple}.
Since $h(\xi,\eta)$ has no quasi-umbilics on $U\setminus \{o\}$,
there are two distinct principal directions at each 
$(\xi,\eta)\in U\setminus \{o\}$.
By Theorem~\ref{prop:758},
$T_\mu$ has two null directions at $(\xi,\eta)$. Hence
$$
0>4\det T_\mu(\xi,\eta)=a(\xi,\eta)^2-b(\xi,\eta)^2
\qquad ((\xi,\eta)\in U\setminus \{o\}).
$$
We let $P(T_pU)$ be the projective space associated to
the tangent space at each point $p$ of
the open submanifold $U$ of $\R^2$.
We denote by 
\begin{equation}\label{eq:1038}
T_pU\ni \mb v\mapsto [\mb v]\in P(T_pU)
\end{equation}
the corresponding canonical projection.
We set
$
\mb v=\pm\Big(\cos \theta(\xi,\eta),\sin \theta(\xi,\eta)\Big)^T,
$
and consider a null direction field $[\mb v]$ 
of $T_\mu$
defined on $U\setminus \{o\}$, where 
the superscript $``T"$ denotes the transpose of the matrix.
Then, the identity
\begin{equation}\label{eq:angle800}
0=\mb v^T \,\pmt{a & b \\ b & a} \mb v=a+b\sin 2\theta
\end{equation}
holds on $U\setminus \{o\}$.
Since $|b|>|a|$, the function $\sin 2\theta$ never attains the
values $\pm 1$
on $U\setminus \{o\}$. So we conclude that
the index of the flow at $o$ induced by $[\mb v]$ is
equal to zero, proving the first assertion.

We next consider the case that
$h$ is negatively curved on $U\setminus \{o\}$.
Since $h$ is time-like, the shape operator of $h$ has 
two eigenvalues with different signs
at each point of $U\setminus \{o\}$
(in this setting, $\mu_{\xi\xi}\mu_{\eta\eta}-\mu_{\xi\eta}^2$ is
positive on a sufficiently small neighborhood of $o$).
Thus, there are no quasi-umbilics on $U\setminus \{o\}$,
and we obtain the second assertion.
\end{proof}

\begin{Exa}\label{eq:2424}
If we set
$
\mu(\xi,\eta):=\xi^2+\xi^4-\eta^2+\eta^4,
$
then the time-like surface $h(\xi,\eta)$ 
associated with $\mu$ (cf. \eqref{eq:h734}) 
has an isolated umbilic at $o:=(0,0)$.
Since $T_{\mu}$ has two  distinct
null directionsaway from $o$, 
the surface $h$ has no quasi-umbilics. So, 
by Proposition D,
the index at $o$ is equal to zero.
Since 
$$
\mu_{\xi\xi}\mu_{\eta\eta}-\mu^2_{\xi\eta}=4 \left(6 \xi^2+1\right) \left(6 \eta^2-1\right)
<0,
$$
\eqref{eq:1075} implies that
the Gaussian curvature of this surface near the origin is positive.
\end{Exa}

We next prove Theorem E: 

\begin{proof}[Proof of Theorem E]
We set 
\begin{equation}\label{eq:1124}
a:=2\mu_{\xi\eta},\qquad b:=\mu_{\xi\xi}+\mu_{\eta\eta}.
\end{equation}
Since $h$ has real principal curvatures at each point,
the function $b^2-a^2$ is non-negative on $U$, and so
there exists a continuous function $\phi$ such that
$
\phi^2=b^2-a^2.
$
We set 
$$
\mb v_1:=(-b+\phi)\frac{\partial}{\partial \xi}+a\frac{\partial}{\partial \eta},
\qquad \mb v_2:=-a\frac{\partial}{\partial \xi}+(b+\phi)\frac{\partial}{\partial \eta}.
$$
Then these two vector fields yield null vector fields of $T_\mu$ 
in the $\xi\eta$-plane.
Since
$$
\det(\mb v_1,\mb v_2)
=\det\pmt{-b+\phi & -a \\ a & b+\phi}
=(b^2-a^2)-b^2+a^2=0,
$$
$\mb v_1$ and $\mb v_2$ are
linearly dependent at each point on $U$.
Moreover, ${\bf v}_1$ and ${\bf v}_2$  do not have common zeros on $U\setminus \{o\}$. 
Assuming for a contradiction, $\mb v_1=\mb v_2=\mb 0$ at some point $p\in U\setminus \{o\}$. 
Then we deduce that $a=b=0$ at $p$.  Then, by Theorem~\ref{prop:758},  
$p$ is an umbilic, contradicting 
our assumption that  there are no umbilics on $U\setminus \{o\}$.
Since $\mb v_1$ and $\mb v_2$ are continuous vector fields,
they generate a $C^1$-curvature
line flow $\mc F_0$ defined on $U\setminus \{o\}$.

As one of the possibilities of $\phi$, 
we may set  $\phi:=\op{sgn}(b)\sqrt{b^2-a^2}$. Since
\begin{equation}\label{eq:1161}
(b^2-a^2+b\phi)(b^2-a^2-b\phi)
=-(b^2-a^2)a^2\le 0,
\end{equation}
we have $b^2-a^2-b\phi\le 0$ and so
$$
(\phi-b)^2-a^2=2(b^2-a^2-b\phi) \le 0.
$$
Then we have
$$
a^2 -(b+\varphi )^2 =-2(b^2 -a^2+\varphi b)\leq 0.
$$ 
So  if we consider the $\xi\eta$-plane as the Lorentz-Minkowski space
of signature $(+-)$,
the vector fields
$\mb v_1$ and $\mb v_2$ are not space-like.
Consequently 
the indices of the flows $\mc F_0$ and $\mc F^\perp_0$
are equal to zero.
\end{proof}

\begin{Remark}\label{rmk:1172}
In the above proof, if $\mb v_1$ and $\mb v_2$ are real analytic,
then the function $\phi$ must be a real analytic function.
For a given real analytic function germ $\Psi$,  
a real analytic function germ $\psi$ satisfying $\psi^2=\Psi$ is uniquely determined up to
$\pm$-ambiguity whenever it exists. Thus,
if a real analytic curvature line flow $\mc F$ exists, 
there are no real analytic curvature line flows other than $\mc F$ and
$\mc F^\perp$. 
\end{Remark}

Even when a time-like surface is real analytic, 
the corresponding curvature line flows might not be real analytic:

\begin{Example}
When $\mu(\xi,\eta):=\xi^4+\eta^4-\xi^3\eta^2$, we have
(cf. \eqref{eq:1124})
$$
a(\xi,\eta)=-12\xi^2 \eta,\qquad
b(\xi,\eta)=
-2 \left(\xi^3-6 \xi^2+3 \xi \eta^2-6 \eta^2\right)
$$
and
$$
b(\xi,\eta)\pm a(\xi,\eta)=12 \Big(\xi^2 + \eta^2+o(\xi^2+\eta^2)\Big),
$$
where $o(\xi^2+\eta^2)$ is the Landau symbol, that is, 
a sum of terms higher than $\xi^2+\eta^2$.
So $b^2-a^2$ is non-negative for sufficiently small $|\xi|$ and $|\eta|$.
However, a continuous function $\phi$
satisfying $\phi^2=b^2-a^2$
is $C^1$-differentiable but not $C^2$-differentiable. 
So the resulting curvature line flows
$\mc F$ and $\mc F^\perp$ are $C^1$-differentiable.
\end{Example}

\rm
Let $h$ be a time-like surface whose principal curvatures 
are real-valued on a neighborhood of an isolated umbilic $o$.
By Theorem E,
the existence of a $C^1$-differentiable curvature line flow whose index is
equal to zero at the umbilic is guaranteed.
However, when quasi umbilics accumulate at $o$,
there might exist another 
curvature line flow whose index does not vanish at $o$
as the following example shows:

\begin{Exa}\label{eq:I2}
If we set
$
\mu(\xi,\eta):=\xi^2\eta^2+(\xi^4+\eta^4)/6,
$
then we have
$$
T_{\mu}(\xi,\eta)=2\pmt{2\xi\eta & \xi^2+\eta^2 \\
             \xi^2+\eta^2 & 2 \xi\eta}.
$$
Hence, the two vector fields
$$
\mb v_1:=-\xi\frac{\partial}{\partial \xi}+\eta\frac{\partial}{\partial \eta},
\qquad 
\mb v_2:=\eta\frac{\partial}{\partial \xi}-\xi\frac{\partial}{\partial \eta}
$$
are both real analytic null vector fields of $T_\mu$, and generate 
two curvature line flows $\mc F_1$ and $\mc F^\perp_1$
whose indices at $o$ are $-1$ and $1$, respectively
(see the left and the center of Figure \ref{fig:1360}).

\begin{figure}[htb]
 \begin{center}
\includegraphics[height=3.2cm]{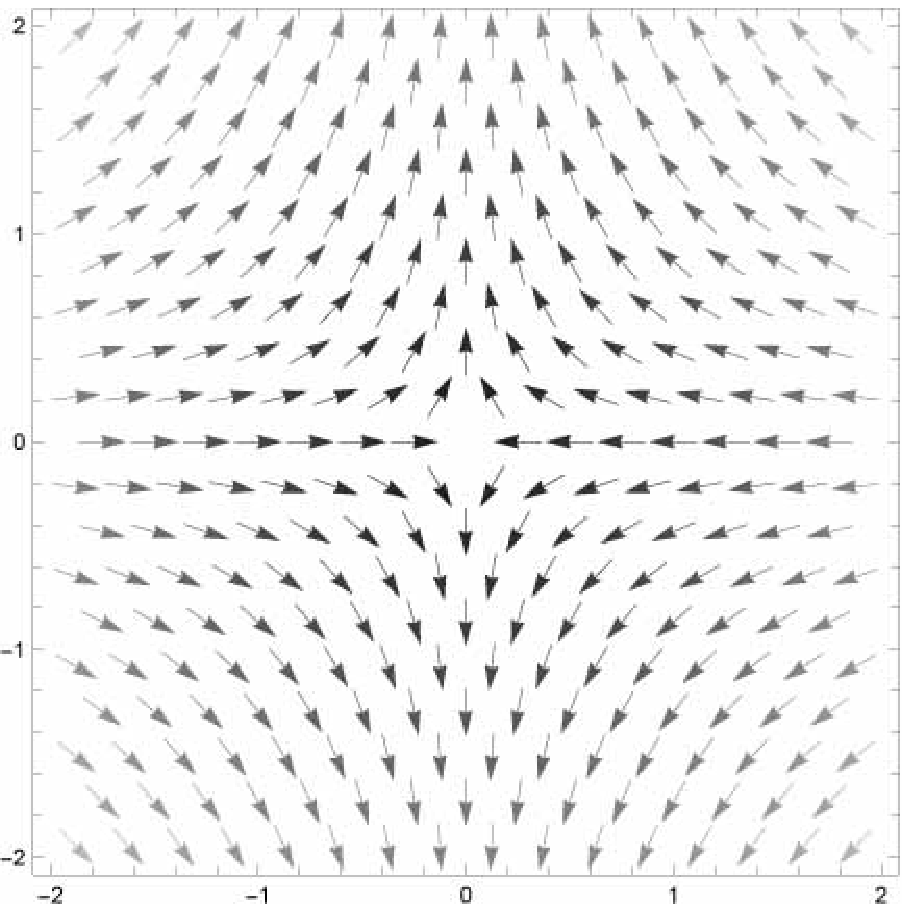}\qquad
\includegraphics[height=3.2cm]{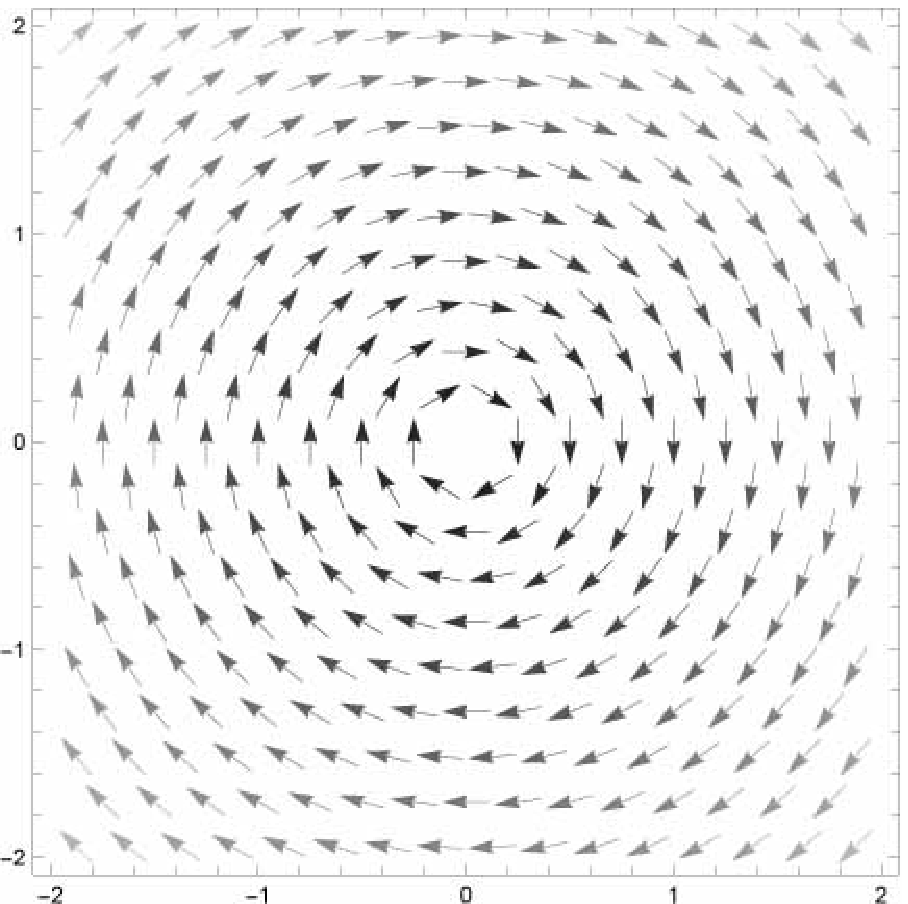}\qquad
\includegraphics[height=3.2cm]{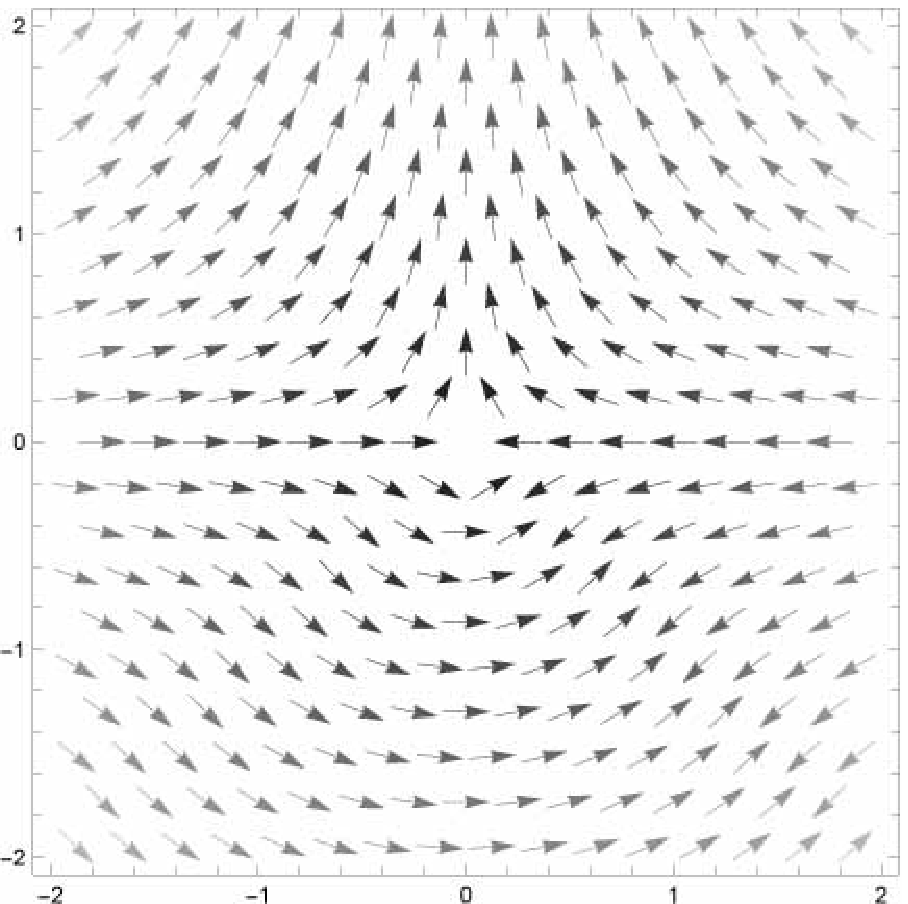}
\caption{The flows generated by $\mb v_1$ (left), 
generated by $\mb v_2$ (center) and the flow $\mathcal{F}_{1/2}$ (right)}
\label{fig:1360}
\end{center}
\end{figure}

On the other hand, 
we denote by $\mathcal{F}_{1/2}$ a $C^1$-differentiable curvature line flow 
induced by $[{\bf v}_2]$ 
if $\eta \leq -|\xi |$ and $[{\bf v}_1]$ if otherwise. 
Then $\mathcal{F}_{1/2}$ has index $-1/2$ at $o$
(see the right of Figure \ref{fig:1360}). 
Hence taking into account
the flows $\mathcal{F}_0$ and $\mathcal{F}^{\perp}_0$ given in the 
proof of Theorem E as well as $\mathcal{F}^{\perp}_{1/2}$, 
the time-like surface associated with 
$\mu$ admits curvature line flows whose indices
take values $0,\pm1/2$ and $\pm 1$.
\end{Exa}

As a generalization of this example, we prove Theorem F:

\begin{proof}[Proof of Theorem F]
We let $j$ denote the imaginary digit of the para-complex numbers, that is,
$j^2=1$ and
any para-complex number can be written in the form 
$a+jb$ ($a,b\in \R$).
The function $\mu$ in Example 
\ref{eq:I2}
can then be rewritten as
$\mu (\xi , \eta )=\op{Re} (\xi +j\eta)^4/6$.
As a generalization, we set
$
\mu_m (\xi , \eta ):=\op{ Re}(\xi +j\eta )^{m}
$
for each positive integer $m(\ge 3)$.
For the sake of simplicity, we set $\mu:=\mu_m$,
and denote by $h(\xi,\eta)$ the corresponding 
time-like surface (cf. \eqref{eq:h734}),
which can be considered as an example 
analogous to Example \ref{exa:Z402}.
Then
$$
T_{\mu} 
=m(m-1)
\pmt{
\op{Im}(\xi+j \eta)^{m-2} & \op{Re}(\xi+j \eta)^{m-2} \\
\op{Re}(\xi+j \eta)^{m-2} & \op{Im}(\xi+j \eta)^{m-2}
}
$$
holds. 
We set $N_2[a+jb]:=a^2-b^2$ ($a,b\in \R$), 
which plays an analogue of the square of the norm of a complex number 
for a para-complex number.
In fact, one can easily check the identity
$N_2[(a+jb)(c+jd)]=N_2[a+jb]\, N_2[c+jd]$
for $a,b,c,d\in \R$.
Using this,
we obtain
\begin{align*}
\frac{\det T_\mu}{m^2(m-1)^2}
&=
\Big(
(\op{Im}(\xi+j \eta)^{m-2})^2-(\op{Re}(\xi+j \eta)^{m-2})^2
\Big) \\
&=-N_2[(\xi+j \eta)^{m-2}]=-N_2[\xi+j \eta]^{m-2}=
-(\xi^2-\eta^2)^{m-2}.
\end{align*}
Moreover, we have
\begin{equation}\label{eq:1294}
\frac{\mu_{\xi\xi}\mu_{\eta\eta}-\mu_{\xi\eta}^2}{m^2(m-1)^2}=
\Big(\op{Re}(\xi+j \eta)^{m-2}\Big)^2-
\Big(\op{Im}(\xi+j \eta)^{m-2}\Big)^2
=
-\frac{\det T_\mu}{m^2(m-1)^2}.
\end{equation}
If $m$ is an odd integer, then 
$\det T_\mu$ takes positive values when $|\eta|>|\xi|$, and
near such a point $(\xi,\eta)$, the curvature line flows of $h$ do not exist. 

So, we set $m=2n$ ($n\ge 2$).
By \eqref{eq:1294}, the Gaussian curvature of $h$ is then non-positive.
To simplify notations,
we define 
two real-valued functions $\alpha$, $\beta$ of $\xi$, $\eta$
by
\begin{equation}\label{eq:alpha-beta}
\alpha+j \beta :=(\xi+j \eta)^{n-1}.
\end{equation}
Then we can write
$$
\frac{T_{\mu}}{m(m-1)} 
=
\pmt{
\op{Im}(\alpha+j \beta)^2 & \op{Re}(\alpha+j \beta)^2 \\
\op{Re}(\alpha+j \beta)^2 & \op{Im}(\alpha+j \beta)^2
}=
\pmt{
2\alpha\beta & \alpha^2+\beta^2 \\
\alpha^2+\beta^2 & 2\alpha\beta
},
$$
and
$$
\mb v(\xi,\eta):=-\alpha(\xi,\eta) \frac{\partial}{\partial \xi}
+\beta(\xi,\eta) \frac{\partial}{\partial \eta},
\quad
\mb w(\xi,\eta):=\beta(\xi,\eta) \frac{\partial}{\partial \xi}
-\alpha(\xi,\eta) \frac{\partial}{\partial \eta}
$$
yield a pair of $C^\infty$-differentiable  
null vector fields of $T_{\mu}$.
Obviously, the origin $o$ is an isolated zero of these vector fields.

We first consider the case that $n$ is odd and set $n=2k+1$ ($k\ge 1$).
Then $\alpha=\op{Re}(x+jy)^{2}$ with $x+jy:=(\xi+j \eta)^{k}$.
Since $\op{Re}(x+jy)^2=x^2+y^2$ for $x,y\in \R$,
the function $\alpha$ is non-negative, and 
the indices of $\mb v$ and $\mb w$
at $o$ are equal to zero.

We next consider the case that $n$ is even.
Each point $(\xi,\eta)\in \Lo^2$ can be identified with
the para-complex number $\xi+j\eta$. 
Motivated by the definition of $\alpha$ and $\beta$ (cf. \eqref{eq:alpha-beta}),
we set
$
\Phi:\Lo^2\ni (\xi,\eta)\mapsto (\xi+j \eta)^{n-1}\in \Lo^2.
$
Then, 
any space-like (resp. time-like) vector
of $\Lo^2$
can be written as 
$$
(-1)^\sigma r E(jt)\quad
(\text{resp.}\,\, (-1)^\sigma r j E(jt))
$$
where
$E(jt):=\cosh t +j \sinh t$, 
$\sigma\in \{0,1\}$, $r>0$ and $t\in \R$.
Since $n-1$ is odd, we have
\begin{align}\label{eq:1339}
&\Phi ((-1)^{\sigma} rj^{\tau} E(jt))=(-1)^{\sigma} r^{n-1} j^{\tau} E(j(n-1)t), \\
\nonumber
&\Phi ((-1)^{\sigma} r(1\pm j))= (-1)^{\sigma} 2^{n-1} r^{n-1} (1\pm j) \quad 
(t\in \mathbb{R}, \sigma, \tau \in \{ 0, 1\}). 
\end{align}
There are four sectors separated by the two lines $\xi=\pm \eta$ in $\Lo^2$.
From \eqref{eq:1339}, we observe that
$\Phi$ maps each of these sectors to itself 
as an orientation preserving diffeomorphism.
So the indices of $\mb v$ and $\mb w$ at $o$ are equal to $-1$ and $1$, respectively.
As this construction is applicable for each even
integer $n(\ge 2)$, we obtain Theorem~F.
\end{proof}

\begin{Rmk}
In this paper, we used modified Ribaucour's parametrizations 
to prove all of the assertions and to construct
examples.  Instead one may use the method of (orthonormal) moving frames
 except for Theorems A, F and Proposition B. 
In this case, the Hessian matrix of the function
associated with a modified Ribaucour's parametrization
corresponds to the second fundamental matrix 
associated with the moving frame method  (cf. Remark \ref{rmk:TcT}). 
However, when constructing umbilics with a given index, 
the method of our  modified Ribaucour's parametrizations 
will be simpler than that using moving frames, 
as demonstrated in the proofs of Theorems A and F.
\end{Rmk}

\begin{acknowledgements}
The second author thanks Professors Farid Tari
and Federico Sanchez-Bringas
for informative conversations 
during their stay in Japan in November 2022.
The authors thank Professors Udo 
Hertrich Jeromin, Wayne Rossman for valuable comments.
\end{acknowledgements}

\medskip
\noindent
{\bf Declaration.}
The authors state that there is no conflict of interest.

\medskip
\noindent
{\bf Data Availability Statement.}
Data sharing not applicable to this article as no datasets were 
generated or analyzed during the current study.

\appendix
\rm

\section{
Null-direction flows of traceless symmetric tensors and
eigen-flows of symmetric tensors 
}

Let $U$ be a neighborhood of the origin $o$ 
in the $xy$-plane, and 
$$
S=s_{11}dx^2+2s_{12}dxdy+s_{22}dy^2
$$
be a symmetric tensor field on $U$,
which can be identified with the matrix-valued
function expressed as
$S=(s_{ij})_{i,j=1,2}$ ($s_{21}:=s_{12}$).
Suppose that $o$ is an isolated scalar point of $S$.
Then, at each point $p\in U\setminus \{o\}$,
we may assume that 
there exists a pair of unit vectors $(\mb v_1,\mb v_2)$
satisfying $S\mb v_i=\lambda_i \mb v_i$ ($i=1,2$) 
and  $\lambda_1(p)\ne \lambda_2(p)$
(see \cite[\S 14]{UY}).
Although $\mb v_i$ ($i=1,2$) have $\pm$-ambiguity,
they define the elements $\xi_i:=[\mb v_i]$ ($i=1,2$)
in the projective space associated with the tangent space $T_pU$ 
at each $p\in U\setminus \{o\}$ (cf. \eqref{eq:1038}). 
Each of these two fields 
$\xi_1$ and $\xi_2$ is called an
{\it eigen-directional field} of $S$  (cf. \cite[\S 14]{UY}).
We remark that $\xi_1$ is perpendicular to $\xi_2$ at each point of $U\setminus \{o\}$.
We define the vector field
\begin{equation}\label{eq:ds931}
\mb v_S^{}:=d_1\frac{\partial}{\partial x}+d_2\frac{\partial}{\partial y}\qquad
(d_1:=s_{11}-s_{22},\,\, d_2:=2s_{12})
\end{equation}
on $U$, 
which is called
the {\it characteristic vector field} of $S$.
We remark that $o$ is an isolated zero point of ${\mb v}_S^{}$.
The following assertion is well-known:

\begin{Fact}[{\cite[Proposition 15.4]{UY}}]\label{fact:1128}
If $o$ is an isolated scalar point of $S$,
then the indices $\op{Ind}_o(\xi_i)$ $(i=1,2)$
of the eigen-directional fields $($i.e.~the indices of 
the eigen-flows$)$ at $o$ satisfy
$$
\op{Ind}_o(\xi_1)=\op{Ind}_o(\xi_2)=\frac1{2}\op{Ind}_o(\mb v_S^{})\in \frac12 \Z,
$$
where $\op{Ind}_o(\mb v_S^{})$ is the index of the vector 
field $\mb v_S^{}$ at $o$.
\end{Fact}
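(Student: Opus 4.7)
The plan is to exploit the classical angle-doubling relationship between the eigendirections of a symmetric $2\times 2$ matrix and its characteristic vector $\mb v_S^{}$. First I would express the eigendirection $\xi_1$ at any non-scalar point as $[(\cos\theta,\sin\theta)^T]$ and use the eigenvector equation $(S-\lambda_1 I)\mb v_1=0$ (with $\lambda_1$ the larger eigenvalue) to compute
$$
\tan\theta=\frac{d_2}{d_1+\sqrt{d_1^2+d_2^2}},
$$
which, after applying the half-angle formula, is equivalent to the key identity
$$
(d_1,d_2)=\sqrt{d_1^2+d_2^2}\,(\cos 2\theta,\sin 2\theta).
$$
This says precisely that on $U\setminus\{o\}$, the characteristic vector field $\mb v_S^{}$ has angular direction $2\theta$, where $\theta$ is the angular direction of $\xi_1$.

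Next I would translate this identity into the claimed index relation. Let $\gamma:[0,1]\to U\setminus\{o\}$ be a small loop around $o$ (chosen small enough that $o$ is the only scalar point inside), and let $\theta(s)$ denote a continuous lift along $\gamma$ of the angle made by $\xi_1$ with the $x$-axis; such a lift exists because the eigendirection field is continuous on $U\setminus\{o\}$, and it is unique up to a global additive constant in $\pi\Z$. By definition of the index of an unoriented line field, $\op{Ind}_o(\xi_1)=(\theta(1)-\theta(0))/(2\pi)\in \frac12\Z$. Combining this with the boxed identity, the continuous unit vector field $\mb v_S^{}\circ\gamma/\|\mb v_S^{}\circ\gamma\|:[0,1]\to S^1$ equals $(\cos 2\theta(s),\sin 2\theta(s))$, whence
$$
\op{Ind}_o(\mb v_S^{})=\frac{2\theta(1)-2\theta(0)}{2\pi}=2\,\op{Ind}_o(\xi_1),
$$
which gives the first half of the assertion.

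Finally, since $\xi_2$ is perpendicular to $\xi_1$ at every point of $U\setminus\{o\}$, a continuous lift of the angle of $\xi_2$ along $\gamma$ is $\theta(s)+\pi/2$; hence $\op{Ind}_o(\xi_2)=\op{Ind}_o(\xi_1)$, completing the proof. The main subtlety is the existence of the continuous angle lift $\theta(s)$: this is a purely topological point, following from the fact that the eigendirection field is a continuous section of the projectivized tangent bundle over $U\setminus\{o\}$, and its monodromy around $\gamma$ is by definition the index $\op{Ind}_o(\xi_1)$. Once the angle-doubling identity is established, everything else reduces to reading off winding numbers.
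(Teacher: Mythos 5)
Your argument is correct. The paper itself gives no proof of this Fact --- it is quoted directly from \cite[Proposition 15.4]{UY} --- and your angle-doubling identity $(d_1,d_2)=\sqrt{d_1^2+d_2^2}\,(\cos 2\theta,\sin 2\theta)$, followed by reading off winding numbers of a continuous angle lift along a small loop, is exactly the standard proof of that cited proposition, so there is nothing to add beyond the minor remark that the intermediate formula $\tan\theta=d_2/(d_1+\sqrt{d_1^2+d_2^2})$ degenerates where $d_2=0$, $d_1<0$, while the boxed identity itself (which is what you actually use) holds there as well and is most cleanly verified by applying the traceless part of $S$ to $(\cos\theta,\sin\theta)$ directly.
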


We next define ``null directions":

\begin{Definition}
A tangent vector $\mb v$ at $p\in U$ is called a {\it null vector}
if $S(\mb v,\mb v)=0$, 
and the 1-dimensional vector subspace spanned by such a null vector
is called a {\it null direction} of $S$  at $p$.
\end{Definition}

If $\det S>0$, then its null directions 
never exist. 
If $\det S<0$ (resp. $\det S=0$) 
at $p\in U$,
then there exists
a pair of null directions
(resp. a unique null direction) at $p$.
In particular, if 
$
s_{11}+s_{22}=0,
$
that is, $S$ is a {\it traceless symmetric tensor}, 
then $\det S\le 0$.
Moreover, these directions are represented as vectors
$
\mb v_1\pm \mb v_2
$,
where $\mb v_1$ and $\mb v_2$ are linearly independent
unit eigenvectors of $S$,
that is, $S$ has the two null directions at each point 
of $U\setminus \{o\}$,
which are obtained by
$45^\circ$-rotation of the eigen-directions if $S$ is a traceless tensor.
So, we obtain the following:

\begin{Proposition}\label{prop:989}
Each of the null direction flows of a traceless symmetric tensor $S$
is obtained by the  $45^\circ$-rotation
of an eigen-flow of $S$.
\end{Proposition}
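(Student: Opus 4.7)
The plan is to unpack the assertion already essentially contained in the paragraph preceding the statement: for a traceless symmetric $2\times 2$ tensor, the null vectors are exactly $\mathbf{v}_1\pm \mathbf{v}_2$ where $\mathbf{v}_1,\mathbf{v}_2$ are orthonormal eigenvectors, and this combination is literally a $45^\circ$-rotation of $\mathbf{v}_1$.

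First, I would fix a point $p\in U\setminus\{o\}$ at which $S$ is not a scalar. Since $S$ is symmetric, there is an orthonormal pair of eigenvectors $\mathbf{v}_1,\mathbf{v}_2\in T_pU$ with eigenvalues $\lambda_1,\lambda_2$. Because $S$ is traceless, $\lambda_1+\lambda_2=0$. Using that $\mathbf{v}_1\perp \mathbf{v}_2$, I would compute
\[
S(\mathbf{v}_1\pm\mathbf{v}_2,\mathbf{v}_1\pm\mathbf{v}_2)=\lambda_1|\mathbf{v}_1|^2+\lambda_2|\mathbf{v}_2|^2\pm 2\lambda_1\,\mathbf{v}_1\cdot\mathbf{v}_2=\lambda_1+\lambda_2=0,
\]
so both $\mathbf{v}_1+\mathbf{v}_2$ and $\mathbf{v}_1-\mathbf{v}_2$ are null vectors. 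Since $\det S=\lambda_1\lambda_2=-\lambda_1^2<0$, the tensor $S$ admits exactly two null directions, so these two directions exhaust the null directions of $S$ at $p$.

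Next, I would observe that because $\mathbf{v}_1$ and $\mathbf{v}_2$ are perpendicular unit vectors in the oriented Euclidean plane $T_pU$, one has $\mathbf{v}_2=\pm R_{90^\circ}\mathbf{v}_1$, where $R_\theta$ denotes the rotation by angle $\theta$. Consequently
\[
\mathbf{v}_1\pm\mathbf{v}_2=\mathbf{v}_1\pm R_{90^\circ}\mathbf{v}_1=\sqrt{2}\,R_{\pm 45^\circ}\mathbf{v}_1,
\]
which shows that each null direction is obtained from the eigen-direction $[\mathbf{v}_1]$ by rotation of $\pm45^\circ$. In particular, the two null directions are themselves the images of the two eigen-directions $[\mathbf{v}_1],[\mathbf{v}_2]$ under a single $45^\circ$-rotation.

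Finally, since the $45^\circ$-rotation can be chosen continuously on $U\setminus\{o\}$ (it is a global isometry of $\R^2$), passing from pointwise statements to statements about directional fields and their flows is automatic. I would conclude that each null-directional field of $S$ is the image of an eigen-directional field of $S$ under the $45^\circ$-rotation, hence the same holds for the integral flows. The only subtle point is the sign choice in $\mathbf{v}_2=\pm R_{90^\circ}\mathbf{v}_1$, but this ambiguity is absorbed once one passes to directions $[\cdot]\in P(T_pU)$, so no additional obstacle arises.
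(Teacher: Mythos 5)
Your proof is correct and follows essentially the same route as the paper, which establishes the proposition via the preceding discussion: the null directions of a traceless $S$ are spanned by $\mathbf v_1\pm\mathbf v_2$ for orthonormal eigenvectors $\mathbf v_1,\mathbf v_2$, and these are the $45^\circ$-rotations of the eigen-directions. Your explicit verification that $S(\mathbf v_1\pm\mathbf v_2,\mathbf v_1\pm\mathbf v_2)=\lambda_1+\lambda_2=0$ and your handling of the sign ambiguity in $P(T_pU)$ merely make the paper's argument more detailed.
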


\end{document}